\documentclass[reqno, a4paper]{amsart}

\usepackage{txfonts} 

\usepackage{amsmath, amssymb, bbm, amsthm,enumitem,  hyperref, color}
\usepackage{graphicx}

\numberwithin{equation}{section}
\newtheorem{thm}{Theorem}[section]

\newtheorem{prop}{Proposition}[section]
\newtheorem{remark}{Remark}[section]
\newtheorem{defn}{Definition}[section]

\newtheorem{lem}{Lemma}[section]

\newcommand{\E}{\mathbb E}

\title[Reciprocal specific relative entropy between continuous martingales]{Reciprocal specific relative entropy between continuous martingales}

\author{Julio Backhoff, Xin Zhang}

\begin{document}

\begin{abstract} 
We introduce a novel notion of divergence between continuous martingales; the reciprocal specific relative entropy. First, we motivate this definition from multiple perspectives. Thereafter, we solve the reciprocal specific relative entropy minimization problem over the set of win-martingales (used as models for prediction markets \cite{MR3096465}). Surprisingly, we show that the optimizer is the renowned neutral Wright-Fisher diffusion. We also justify that this diffusion is in a sense the most salient win-martingale, since it is uniquely selected when we suitably perturb the degenerate martingale optimal transport problem of variance minimization.
\end{abstract}
\keywords{Entropy, win-martingale, martingale optimal transport, Wright-Fisher diffusion}

\thanks{This research was funded 
in whole or in part by the Austrian Science Fund (FWF) DOI 10.55776/P36835. X. Zhang is partially supported by the NSF Grant DMS-2508556.  Both authors thank Yuxing Huang for his help on numerical simulations; see Remark~\ref{eq:multidim}.}

\maketitle

\section{Introduction}

A divergence between martingale measures is a functional that quantifies how far alternative risk-neutral models are from a reference one. Ideally, this allows the decision-maker to analyze stability and model uncertainty while preserving the martingale constraint. Identifying a real-valued continuous martingale as a probability measures $\mathbb Q$ on the continuous path space $C([0,1];\mathbb{R})$, 
this note focuses on the study of the reciprocal specific relative entropy as a divergence between $\mathbb Q$ and the Wiener measure $\mathbb W$, given by
\begin{align*}
\mathfrak{h}( \mathbb Q \| \mathbb W):= \frac{1}{2}\mathbb E_{\mathbb Q}\left[\int_0^1[ \Sigma_t\log(\Sigma_t)+1-\Sigma_t]  dt \right] = \frac{1}{2}\mathbb E_{\mathbb Q}[H( \langle X\rangle\,|\,\text{Leb}_{[0,1]})],
\end{align*}
where $(\Sigma_t:=d\langle X\rangle_t/ dt)_{t \in [0,1]}$ is the instantaneous quadratic variation of the martingale measure $\mathbb Q$. Hence $\mathfrak{h}( \mathbb Q \| \mathbb W)$ measures the deviation between $\Sigma$ and $1$, the volatility of Brownian motion. (Equivalently, the deviation between the random measure $d\langle X\rangle_t$ and $dt$, which is induced by the quadratic variation of Brownian motion.)

Our motivation for studying the divergence $\mathfrak{h}$ is threefold:
\begin{enumerate}
    \item It acts as a selection criterion for some divergence optimization problem. As introduced in \cite{BaZh26}, the specific $p$-Wasserstein divergence measures the distance between martingales based on a power of their volatility. Though defined as a scaling limit, it often admits the explicit form
$$\textstyle\mathbb E_{\mathbb Q}\left[ \int_0^1\Sigma_t^{p/2}dt\right].$$
However, for the critical case $p=2$, the calibration problem of optimizing this quantity over martingales with given initial and terminal distributions yields non-unique solutions, due to It\^{o}'s isometry. In fact, every feasible martingale is optimal in this case. We establish that $\mathfrak{h}$ arises naturally (but under an integrability assumption) as the derivative of the specific $p$-Wasserstein divergence with respect to $p$ at $p=2$. Consequently, minimizing $\mathfrak{h}$ allows us to ``break the tie" and select a special (and unique) optimizer for $p=2$. 

\item This entropy represents the continuum limit of the relative entropy between discrete approximations of underlying martingales (Trinomial models), suggesting it is a natural entropic quantity for limit theorems involving discrete martingales when one passes to continuous-time. This fact had already been observed by Avellaneda et.\ al.\ \cite{AFHS97}.

\item It satisfies a reciprocal relationship with the better known specific relative entropy $$\textstyle h(\mathbb W \|\mathbb Q):= \frac{1}{2}\mathbb E_{\mathbb Q}\left [ \int_0^1\Sigma_t-\log\Sigma_t-1 \right ]=\frac{1}{2}\mathbb E_{\mathbb Q}[H(  \text{Leb}_{[0,1]}\,|\, \langle X\rangle)],$$ defined in \cite{Ga91}; That is, via a time-change argument, we show that $h(\mathbb W \|\mathbb Q)= \mathfrak{h}(\mathbb Q \| \mathbb W)$ for certain martingale measures, justifying at least partially the terminology ``reciprocal". In general, the real connection between $h(\mathbb W \|\mathbb Q)$ and $\mathfrak{h}(\mathbb Q \| \mathbb W)$ lies in flipping the roles of $\langle X\rangle$ and $\text{Leb}_{[0,1]}$ in the definitions in terms of expected entropies.
\end{enumerate}
 
We explain in more detail these motivations in Section \ref{sec:reverse_motivation}.  To further support the choice of the reciprocal specific relative entropy as a measure of divergence between a martingale and Brownian motion, the rest of the article is devoted to an example of a calibration (i.e., optimization) problem where the reciprocal specific relative entropy serves as a loss function. 
This is also the core contribution of this work. Concretely, we fully solve the associated win-martingale optimization problem.  That is, letting $\mathcal{M}_{0,x_0}^{win}$ denote the class of martingales that start at $x_0\in(0,1)$ and terminate at the boundary values $\{0,1\}$, we consider the optimization problem \begin{equation}\label{eq:intromin}\inf_{\mathbb Q \in \mathcal{M}_{0,x_0}^{win}} \mathfrak{h}(\mathbb Q \| \mathbb W)\end{equation}These win-martingales serve as fundamental models for prediction markets \cite{MR3096465}, where the asset price must eventually converge to a binary outcome (win or loss). By now it is understood that calibration problems do not generally admit an explicit solution unless the terminal prescribed marginal is of this form, and this is the reason why we chose to test the potential of the reciprocal specific relative entropy in this setting.

By framing the aforementioned win-martingale minimization as a stochastic control problem, we derive the associated Hamilton-Jacobi-Bellman (HJB) equation. Our main result (Theorem \ref{thm}) establishes that the unique optimizer of \eqref{eq:intromin} is, rather surprisingly, the (scaled) neutral Wright-Fisher diffusion\footnote{`Neutral' here refers to the fact that there is no drift. We also write `scaled' because the diffusion has been time-changed to be indexed by $[0,1]$.}
$$dX_t=\sqrt{\frac{X_t(1-X_t)}{1-t}} \,dB_t, \,\,\,\, X_0=x_0.$$
This process is renowned in population genetics for modeling gene frequency dynamics. For us, it emerges naturally as the prediction market (i.e., win-martingale) that minimizes uniquely the reciprocal specific entropy. We prove this result by verifying that the candidate value function is the unique viscosity solution to the derived HJB equation. As an interesting intermediate step, we obtain that the squared volatility process $\frac{X_t(1-X_t)}{1-t}$ is a martingale on $[0,1)$, a fact that we haven't been able to find in the classical literature on the subject.  In addition, we check the necessary integrability conditions (see Point (1) above) to link the value function rigorously to the derivative of the specific $p$-Wasserstein divergence minimization problem for $p=2$.  More precisely, we prove that 
\begin{align*}
    \inf_{\mathbb Q \in \mathcal{M}_{0,x_0}^{win}} \mathfrak{h}(\mathbb Q \| \mathbb W) =\frac{d}{dp}\Big|_{p=2} \inf_{\mathbb Q \in \mathcal{M}_{0,x_0}^{win}} \mathbb E_{\mathbb Q}\left[ \int_0^1 \Sigma_t^{p/2} \, dt \right].
\end{align*}
This means that the neutral Wright-Fisher diffusion is selected, among all competing win-martingales (all of which are optimal for $p=2$), when we let $p$ approach $2$.

\subsection{Related literature}

The specific relative entropy was first introduced and interpreted as a rate function for a large deviation principle by Gantert \cite{Ga91}. More recently \cite{BaBe24,MR4651162}  obtained an explicit formula for this quantity between time-homogeneous Markov martingales, and F\"{o}llmer \cite{MR4433813} extended Gantert's results and established a Talagrand-type inequality between semimartingale laws using this object. In \cite{BBexciting,BWZ24} the specific relative entropy was used to solve an open question by Aldous (see \cite{2023arXiv230607133G} for an alternative point of view and solution). This concept of specific relative entropy appeared in mathematical finance in the works of Avellaneda et al \cite{AFHS97}, Cohen and Dolinsky \cite{CoDo22}, and also Dolinsky and Zhang \cite{DoZh25}.The recent articles \cite{BeChHoLoVi24,BeChLo24} by Benamou, Chazareix, Hoffmann and Loeper have suggested the use of the specific relative entropy as a regularization device for the problem of model calibration in finance, and studied the convergence of discrete- to continuous-time. Following this line of research, in \cite{BaZh26} the authors introduced the specific $p$-Wasserstein divergence, and analyzed its optimization problem.

Win-martingales have been proposed as models for prediction markets (cf.\ \cite{MR3096465}), and optimization problems over the set of win-martingales were proposed by Aldous \cite{aldous_winmartingale} in connection to the aforementioned open question. Such optimization problems are particular instances of martingale optimal transport, a subject that has been extensively studied in the recent years. Following \cite{BeHePe12, BoNu13,GaHeTo13,HoNe12}
martingale versions of the classical transport problem are often considered due to applications in  mathematical finance but admit further applications, e.g.\ to the Skorokhod problem \cite{BeCoHu14, BeNuSt19}. In analogy to classical optimal transport, necessary and sufficient conditions for optimality have been established for martingale transport (MOT) problems in discrete time (\cite{BeJu21, BeNuTo16}) but not so much is known for the continuous time problem. Notable exceptions are \cite{BWZ24,BBexciting,backhoff2020martingale,2023arXiv230611019B,2023arXiv230607133G,GuLo21,huesmann2019benamou,Lo18,tan2013optimal}.

\section{Reciprocal specific relative entropy}\label{sec:reverse_motivation}

In this section, we provide different motivations for the reciprocal specific relative entropy $\mathfrak{h}$. First, it can be viewed as the derivative of the specific $p$-Wasserstein divergence defined in \cite{BaZh26} as $p \to 2$. Second, it is the limit of relative entropy between discrete approximations of underlying martingales. Third, by a change of variable argument, for a certain class of martingale measures $\mathbb Q$ it can be show that $h(\mathbb Q \|\mathbb W)= \mathfrak{h}(\mathbb W \| \mathbb Q)$. That is why we call it the reciprocal specific relative entropy. 

\subsection{Limit of the specific $p$-Wasserstein divergence} 

Suppose $\mathbb Q \in \mathcal{P}(C([0,1];\mathbb R))$ is a one-dimensional martingale measure. As defined in \cite{BaZh26}, its specific $p$-Wasserstein divergence with respect to the constant martingale $\mathbb P_{\delta}$, i.e. a martingale with zero quadratic variation, is given explicitly by the formula
\begin{align}\label{eq:swp}
     \mathbb E_{\mathbb Q}\left[\int_0^1  |\sigma_t|^p \,dt \right]=\mathbb E_{\mathbb Q}\left[\int_0^1  \Sigma_t^{p/2} \,dt \right],
\end{align}
up to a constant factor, where $(\sigma_t^2:=\Sigma_t)_{t\in[0,1]}$ is the instantaneous quadratic variation process under the measure $\mathbb Q$. Taking derivative w.r.t.\ $p$ at $p=2$ and assuming one can exchange integration and derivative we obtain (almost) the expression of  the reciprocal specific relative entropy
\begin{align}\label{eq:rse}
   \frac{1}{2} \mathbb E_{\mathbb Q}\left[\int_0^1 \Sigma_t \log(\Sigma_t) \, dt \right].
\end{align}

For any $\mu, \nu \in \mathcal{P}_2(\mathbb R)$ with $\mu \leq_c \nu$ (i.e.,\ having finite second-order moment and being in convex order), consider the family of martingale optimal transport problems 
\begin{align}\label{eq:MOTp}
MT_p(\mu,\nu):= \inf_{\mathbb Q \in \mathcal{M}(\mu,\nu)} \mathbb E_{\mathbb Q}\left[\int_0^1 |\sigma_t|^p \,dt \right], \quad p > 2. 
\end{align}
For $p=2$, It\^{o}'s isometry yields $MT_2(\mu,\nu)=\int x^2 \, d(\nu-\mu)$, and any feasible martingale in $\mathcal{M}(\mu,\nu)$ is an optimizer. We want to single out a particular one by considering the optimization problem 
\begin{align}\label{eq:MOTre}
   MT_{re}(\mu,\nu):=\inf_{\mathbb Q \in \mathcal{M}(\mu,\nu)}  \frac{1}{2}  \mathbb E_{\mathbb Q}\left[\int_0^1 \Sigma_t \log(\Sigma_t) \, dt \right],
\end{align}
which can be interpreted as the derivative of $MT_p(\mu,\nu)$ w.r.t.\ $p$ at $p=2$. To wit:
\begin{lem}\label{lem:swplimit}
We always have $$\liminf_{p \searrow 2} \frac{MT_p(\mu,\nu)-MT_2(\mu,\nu)}{p-2} \geq MT_{re}(\mu,\nu).$$ If an optimizer $\mathbb Q^*$ of \eqref{eq:MOTre} satisfying $\mathbb E_{\mathbb Q^*} \left[ \int_0^1 |\sigma_t|^{2+\epsilon} \, dt\right]$ for some $\epsilon \in (0,1)$ exists, then  
    \begin{align*}
    \lim\limits_{p \searrow 2} \frac{MT_p(\mu,\nu)-MT_2(\mu,\nu)}{p-2}= MT_{re}(\mu,\nu).
    \end{align*}
\end{lem}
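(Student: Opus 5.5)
The plan uses convexity of $p\mapsto \Sigma^{p/2}$ for the lower bound, and a two-sided control of the difference quotient along the fixed optimizer $\mathbb Q^*$ for the upper bound. Throughout we use that $\mathbb E_{\mathbb Q}[\int_0^1\Sigma_t\,dt]=MT_2(\mu,\nu)$ for every $\mathbb Q\in\mathcal M(\mu,\nu)$, by It\^o's isometry.

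\emph{Lower bound.} For fixed $s\ge 0$, the map $p\mapsto s^{p/2}$ is convex with derivative $\frac12 s^{p/2}\log s$, and for $s=0$ we use the convention $0\log 0=0$. Convexity gives the pointwise inequality
\[
\frac{s^{p/2}-s}{p-2}\ \ge\ \tfrac12\, s\log s \qquad\text{for all } p>2 .
\]
Fix $p>2$ and any $\mathbb Q\in\mathcal M(\mu,\nu)$ with $\mathbb E_{\mathbb Q}[\int_0^1\Sigma_t^{p/2}\,dt]<\infty$; otherwise the inequality below is trivial. Apply the pointwise inequality with $s=\Sigma_t$ and integrate. The negative part of $s\log s$ is bounded by $e^{-1}$, so the expectation of the right-hand side is well defined. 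Subtracting $MT_2(\mu,\nu)$ then yields
\[
\frac{\mathbb E_{\mathbb Q}\big[\int_0^1\Sigma_t^{p/2}\,dt\big]-MT_2(\mu,\nu)}{p-2}\ \ge\ \tfrac12\,\mathbb E_{\mathbb Q}\Big[\int_0^1\Sigma_t\log\Sigma_t\,dt\Big]\ \ge\ MT_{re}(\mu,\nu).
\]
Take the infimum over $\mathbb Q$ to get $\frac{MT_p-MT_2}{p-2}\ge MT_{re}$ for every $p>2$. Taking $\liminf$ as $p\searrow 2$ gives the first claim.

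\emph{Upper bound.} Let $\mathbb Q^*$ be an optimizer of \eqref{eq:MOTre} with $\mathbb E_{\mathbb Q^*}[\int_0^1\Sigma_t^{1+\epsilon/2}\,dt]<\infty$. Since $\mathbb Q^*$ is admissible,
\[
\frac{MT_p-MT_2}{p-2}\ \le\ \mathbb E_{\mathbb Q^*}\Big[\int_0^1\frac{\Sigma_t^{p/2}-\Sigma_t}{p-2}\,dt\Big].
\]
It then suffices to pass to the limit $p\searrow 2$ inside the expectation by dominated convergence, for $p\in(2,2+\epsilon)$. Pointwise, the integrand converges to $\frac12\Sigma_t\log\Sigma_t$.

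For the domination, apply the mean value theorem: the quotient equals $\frac12 s^{q/2}\log s$ for some $q\in(2,p)$. On $\{s\le 1\}$ this is bounded in absolute value by $\frac12 s|\log s|\le \frac{1}{2e}$. On $\{s>1\}$, the quotient is increasing in $p$ by convexity, so it is bounded by its value at $p=2+\epsilon$, namely $\frac{s^{1+\epsilon/2}-s}{\epsilon}\le \frac{s^{1+\epsilon/2}}{\epsilon}$. This bound is $dt\otimes d\mathbb Q^*$-integrable by assumption. Dominated convergence then gives
\[
\limsup_{p\searrow 2}\frac{MT_p-MT_2}{p-2}\le MT_{re}(\mu,\nu).
\]
Combined with the lower bound, this proves the limit.

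The only delicate point is this domination step, in particular handling the region $\Sigma_t>1$ uniformly in $p$. This is exactly what the $(2+\epsilon)$-moment hypothesis provides. One should also check that $MT_{re}(\mu,\nu)$ is finite there; this follows from the same bound, since $s\log s\le C_\epsilon(1+s^{1+\epsilon/2})$.
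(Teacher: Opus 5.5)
Your proof is correct and follows the paper's route. The lower bound comes from the tangent-line inequality for the convex map $p\mapsto \Sigma^{p/2}$ at $p=2$. The upper bound comes from evaluating along $\mathbb Q^*$ and controlling the difference quotient by its value at $p=2+\epsilon$.

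The one difference is in the limit exchange. The paper applies reverse Fatou using only the one-sided bound $\frac{\Sigma^{p/2}-\Sigma}{p-2}\le\frac{\Sigma^{1+\epsilon/2}-\Sigma}{\epsilon}$, which already holds for all $\Sigma\ge 0$. You add the bound $\frac{1}{2e}$ on $\{\Sigma\le 1\}$ so that dominated convergence applies. Both arguments are valid.
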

\begin{proof}

As $p \mapsto |\sigma|^p$ is convex, 
\begin{align*}
   |\sigma|^p \geq |\sigma|^2+\frac{p-2}{2}|\sigma|^2 \log (\sigma^2),
\end{align*}
which yields that 
\begin{align*}
    \frac{|\sigma|^p-|\sigma|^2}{p-2} \geq \frac{1}{2} |\sigma|^2 \log(\sigma^2). 
\end{align*}
Minimizing both sides over martingale measures, we obtain that $$\frac{MT_p(\mu,\nu)-MT_2(\mu,\nu)}{p-2} \geq MT_{re}(\mu,\nu),$$
and hence $$\liminf_{p \searrow 2} \frac{MT_p(\mu,\nu)-MT_2(\mu,\nu)}{p-2} \geq MT_{re}(\mu,\nu).$$

On the other hand, applying Fatou's lemma 
\begin{align}\label{eq:limitfatou}
   \frac{1}{2}\mathbb E_{\mathbb Q^*}\left[ \int_0^1 \sigma^2_t \log \sigma^2_t \, dt \right] &= \mathbb E_{\mathbb Q^*}\left[\int_0^1 \limsup\limits_{p \searrow 2}  \frac{\sigma_t^p-\sigma_t^2 }{p-2}dt\right] \notag \\
    & \geq \limsup\limits_{p \searrow 2} \frac{\mathbb E_{\mathbb Q^*}\left[\int_0^1 \sigma_t^p-\sigma_t^2  \, dt \right]}{p-2},
\end{align}
where we used that 
\[\frac{|\sigma|^p-|\sigma|^2 }{p-2} \leq \frac{|\sigma|^{2+\epsilon}-|\sigma|^2 }{\epsilon}, \quad p \in (2, 2+\epsilon),\]
and thus the l.h.s.\ is bounded from above by an integrable function, by assumption. Therefore $$\limsup_{p \searrow 2} \frac{MT_p(\mu,\nu)-MT_2(\mu,\nu)}{p-2} \leq MT_{re}(\mu,\nu).$$
\end{proof}

\begin{remark}
 We comment that the integrability assumption in Lemma~\ref{lem:swplimit} is 
 necessary in \eqref{eq:limitfatou} when applying Fatou's lemma for a martingale measure $\mathbb Q$. To wit, let us take
$\sigma_t^2 = 1/\left(t \left( \ln\left(e/t\right) \right)^3\right)$, and $\mathbb Q$ to be the law of process $\left(\int_0^t \sigma_s \,dB_s\right)_{t \in [0,1]}$. It can be easily checked that $\int_0^1 \sigma_t^2 \log(\sigma_t^2) \, dt < +\infty$, but $\int_0^1 \sigma_t^{2(1+\epsilon)} \, dt = +\infty$ for any $\epsilon \in (0,1)$. Then we have $\mathbb E_{\mathbb Q}\left[ \int_0^1 \sigma^2_t \log \sigma^2_t \, dt \right]<+\infty$, but $\mathbb E_{\mathbb Q}\left[\int_0^1 \sigma_t^p \,dt \right]=+\infty$ for any $p>2$. Therefore \eqref{eq:limitfatou} fails for $\mathbb Q$. Later we will verify that the Wright-Fisher diffusion satisfies this integrability assumption. 
\end{remark}

\subsection{Limit of Trinomial Models}

Let $h:=1/N>0$ and $\sigma,\sigma_0>0$ be given. Take now any $\bar \sigma>\sigma,\sigma_0$. The simplest martingale evolving in a trinomial tree that behaves qualitatively like $(\sigma B_t)_t$ is $(M^\sigma_t:t\in\{0h,1h,...,Nh\})$ with independent increments $M^\sigma_{(k+1)h}-M^\sigma_{kh}\in\{-\bar\sigma\sqrt{h},0,\bar\sigma\sqrt{h}\}$ such that
$$\mathbb P(M^\sigma_{(k+1)h}-M^\sigma_{kh}=\bar\sigma\sqrt{h})=\mathbb P(M^\sigma_{(k+1)h}-M^\sigma_{kh}=-\bar\sigma\sqrt{h})= \frac{\sigma^2}{2\bar\sigma^2}. $$
This way the variance of the increment of $M^\sigma$ is precisely $\sigma^2h$. It follows that 
$$(\bar\sigma \sqrt{h})^2H(M^\sigma|M^{\sigma_0})= \sigma^2\log \frac{\sigma^2}{\sigma_0^2}+\left(\bar\sigma^2-\sigma^2\right )\log \frac{\bar\sigma^2-\sigma^2}{\bar\sigma^2-\sigma_0^2}=\sigma^2\log \frac{\sigma^2}{\sigma_0^2}+\sigma_0^2-\sigma^2 +o(1),$$
for $\bar\sigma$ large. Specializing to the case $\sigma_0=1$ we conclude that in the limit where $h$ is going to zero and $\bar\sigma$ to infinity, 
$$(\bar\sigma \sqrt{h})^2H(M^\sigma|M^{1})\to \sigma^2\log (\sigma^2)-\sigma^2+1=H(\lambda^\sigma|\lambda), $$
with $\lambda$ Lebesgue measure on $[0,1]$, $d\lambda^\sigma(x)=\sigma^2d\lambda(x)$, and $H(q|p):\int\{\frac{dq}{dp}\log \frac{dq}{dp} - \frac{dq}{dp} +1\}  dp $ is an extension of the relative entropy beyond probability measures. 
This gives us a quantity that does not depend on the space-grid parameter $\bar\sigma$ 
(similar as in Avellaneda et al \cite{AFHS97}). This quantity is precisely the reciprocal specific relative entropy between $(\sigma B_t)_t$ and $(B_t)_t$.

\subsection{Reversing the roles of $\mathbb Q$ and $\mathbb W$}

Suppose $\mathbb Q$ is the law of the SDE $dX_t=\sigma(X_t) \, dB_t$ with say $X_0=0$. Then 
\begin{align*}
    h(\mathbb W || \mathbb Q)=\frac{1}{2}\mathbb E_{\mathbb W} \left[ \int_0^1 \frac{1}{\sigma(Y_t)^2}-\log\left(\frac{1}{\sigma(Y_t)^2} \right)-1 \,dt \right]. 
\end{align*}
Noting $X_{\langle X \rangle^{-1}_t}$ is a Brownian motion under $\mathbb Q$, we have
\begin{align*}
    h(\mathbb W || \mathbb Q)=\frac{1}{2}\mathbb E_{\mathbb Q} \left[ \int_0^1 \frac{1}{\sigma(X_{\langle X \rangle^{-1}_t})^2}-\log\left(\frac{1}{\sigma(X_{\langle X \rangle^{-1}_t})^2} \right)-1 \,dt\right].
\end{align*}
Changing variable $t=\langle X\rangle_s$, we have 
\begin{align*}
    &\int_0^1 \frac{1}{\sigma(X_{\langle X \rangle^{-1}_t})^2}-\log\left(\frac{1}{\sigma(X_{\langle X \rangle^{-1}_t})^2} \right)-1 \,dt \\
    &= \int_0^{\langle X \rangle^{-1}(1)} \left( \frac{1}{\sigma(X_s)^2}-\log\left(\frac{1}{\sigma(X_s)^2} \right)-1 \right) \sigma(X_s)^2 \,ds. 
\end{align*}
Therefore, we have that 
\begin{align*}
    h(\mathbb W || \mathbb Q)&=\frac{1}{2} \mathbb E_{\mathbb Q}\left[ \int_0^{\langle X \rangle^{-1}(1)} \left( \frac{1}{\sigma(X_s)^2}-\log\left(\frac{1}{\sigma(X_s)^2} \right)-1 \right) \sigma(X_s)^2 \,ds\right] \\
    &=\frac{1}{2} \mathbb E_{\mathbb Q}\left[ \int_0^{\langle X \rangle^{-1}(1)}  1+\sigma(X_s)^2\log\left(\sigma(X_s)^2 \right)-\sigma(X_s)^2 \,ds\right].
\end{align*}
As a result, if (and only if) $\langle X\rangle_1=1$ $\mathbb Q$-a.s., then 
$$h(\mathbb W || \mathbb Q) = \mathfrak{h}( \mathbb Q \| \mathbb W).$$

\section{Win-martingale optimization}
In this section, we would like to minimize $$\mathfrak{h}(\mathbb Q | \mathbb W)=\frac{1}{2}\mathbb E_{\mathbb Q} \left[\int_0^1 \sigma_u^2 \log(\sigma_u)^2+1-\sigma_u^2 \, du \right] $$ as $\mathbb Q$ varies over the class of win martingales. Denote by $\mathcal{M}_{t,x}^{win}$ the set of win-martingales that start with $x \in (0,1)$ at time $t \in [0,1]$, that is, continuous martingales that admit an absolutely continuous quadratic variation, start at $x$ at time zero and finish in either $0$ or $1$ at time one. As the integration of $(1-\sigma_u^2)$ is independent of the choice of $\mathbb Q \in \mathcal{M}_{t,x}^{win}$, we consider an equivalent optimization problem
\begin{align}\label{eq:WFvalue}
     v(t,x)=  \inf_{\mathbb Q \in \mathcal{M}_{t,x}^{win}} \frac{1}{2}\mathbb E_{\mathbb Q}\left[\int_t^1 \sigma_u^2 \log(\sigma^2_u) \,du  \right],
\end{align}
which satisfies the HJB equation by direct computation
\begin{align*}
    -\partial_t v(t,x)= \inf_{\Sigma \geq 0} \frac{1}{2} \left\{ \Sigma \log(\Sigma)+ \Sigma \partial_x^2 v(t,x) \right\}.
\end{align*}
As $\mathbb R_+ \mapsto \Sigma \log(\Sigma)+ \Sigma \partial_x^2(t,x)$ is strictly convex, the first order condition gives the optimal 
\begin{align}\label{eq:optimalSigma}
    \Sigma^*_t(x)= \exp( -\partial_x^2 v(t,x)-1).
\end{align}
Hence, one can simplify the Hamiltonian, and obtain the HJB equation
\begin{align}\label{eq:HJB}
\begin{cases}
  -\partial_t v(t,x) = - \frac{1}{2} e^{- \partial_x^2 v(t,x)-1}  \\
  \hspace{12pt} v(1,x) = +\infty,   \quad  \quad \quad \hspace{19pt} x \in  (0,1) \\
  \hspace{14pt} v(t,x) = 0,  \quad \quad \quad \quad \ \  (t,x) \in [0,1) \times \{0,1\}.
\end{cases}
\end{align}

By a scaling argument as in Lemma~\ref{lem:terminal} below, we have that $ v(t,x)=v(0,x)-\frac{1}{2}\log(1-t)x(1-x)$, and $v(0,x)$ satisfies 
\begin{align}\label{eq:elliptic}
    x(1-x)=\exp\left(-\partial_x^2 v(0,x)-1 \right),
\end{align}
with the boundary condition $v(0,0)= v(0,1)=0$. It can be verified that
\begin{align*}
    f(x):=-\left( \frac{1}{4}x^2 \log(x^2)+\frac{1}{4}(1-x)^2 \log ((1-x)^2)+x(1-x) \right).
\end{align*}
is a solution to \eqref{eq:elliptic}. Therefore, it can be checked that
\begin{align}\label{eq:candidate}
\bar v(t,x): =&  -\left( \frac{1}{4}x^2 \log(x^2)+\frac{1}{4}(1-x)^2 \log ((1-x)^2)+x(1-x) \right) \\
&-\frac{1}{2}\log(1-t)x(1-x) \notag 
\end{align}
solves \eqref{eq:WFvalue}. Moreover, \eqref{eq:optimalSigma} and \eqref{eq:elliptic} imply that $\Sigma^*_t(x)=\frac{x(1-x)}{1-t}$, and a candidate of optimizer is given by the scaled neutral Wright-Fisher diffusion \begin{align}\label{eq:WFdiffusion}
\begin{cases}
dM_s=\sqrt{\frac{M_s(1-M_s)}{1-s}} \, dB_s\,\, (s>t), \\
\, \ \, M_t=x.
\end{cases}
\end{align}

Recalling one motivation of the reciprocal specific relative entropy in Lemma~\ref{lem:swplimit}, we will also show that the value function $v$ is the derivative of a family of martingale optimal transport problems. Let us summarize our main result, whose proof will be divided into several parts and presented later:

\begin{thm}\label{thm}
    The unique optimizer of \eqref{eq:WFvalue} is the scaled neutral Wright-Fisher diffusion \eqref{eq:WFdiffusion}, the value function is given explicitly via  $v(t,x)=\bar v(t,x)$. Furthermore, we have \begin{align}\label{eq:derivativeswp}
    v(0,x)= \frac{d}{dp} \Big|_{p=2} MT_p(\delta_x, x\delta_1+(1-x)\delta_0).
    \end{align}
\end{thm}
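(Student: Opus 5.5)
I will follow a verification strategy: show $\bar v \le v$ for every competitor, show that the Wright--Fisher diffusion \eqref{eq:WFdiffusion} attains $\bar v$, and then obtain uniqueness from the strict convexity of the Hamiltonian. Formula \eqref{eq:derivativeswp} will follow from Lemma~\ref{lem:swplimit} once an integrability estimate is in place.

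\emph{Properties of $\bar v$.} First I will check by direct differentiation that
\[
\partial_x^2\bar v(t,x)=-\log x-\log(1-x)-1+\log(1-t)=-\log\frac{x(1-x)}{1-t}-1 .
\]
It follows that $\exp(-\partial_x^2\bar v-1)=\frac{x(1-x)}{1-t}$. Since $-\partial_t\bar v=-\frac{x(1-x)}{2(1-t)}$, the function $\bar v$ is a classical solution of \eqref{eq:HJB} on $[0,1)\times(0,1)$ and vanishes at $x\in\{0,1\}$.

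\emph{Lower bound.} Fix $\mathbb Q\in\mathcal M^{win}_{t,x}$ with finite cost and $T<1$. Pointwise convexity gives
\[
\tfrac12\Sigma\log\Sigma+\tfrac12\Sigma\,\partial_x^2\bar v\ge \partial_t\bar v \quad\text{for all }\Sigma\ge 0 .
\]
Applying It\^o's formula to $\bar v(s,X_s)$ on $[t,T\wedge\tau_n]$, where $\tau_n$ localizes away from the boundary, yields
\[
\bar v(t,x)\le \mathbb E_{\mathbb Q}\Big[\tfrac12\int_t^{T\wedge\tau_n}\Sigma_u\log\Sigma_u\,du+\bar v(T\wedge\tau_n,X_{T\wedge\tau_n})\Big].
\]
Letting $n\to\infty$ is harmless because $\bar v$ is continuous up to $x\in\{0,1\}$ for fixed $T$, and $\Sigma\log\Sigma\ge -e^{-1}$ lets me use Fatou/monotone arguments. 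The delicate limit is $T\to1$. We have $\bar v(T,X_T)=f(X_T)-\frac12\log(1-T)X_T(1-X_T)$; the first term tends to $0$ because $X_T\to\{0,1\}$ and $f$ is bounded and vanishes there. For the second term I need $\log(1-T)\,\mathbb E[X_T(1-X_T)]\to 0$. Note that $\mathbb E[X_T(1-X_T)]=\mathbb E\int_T^1\Sigma_u\,du$, and Jensen/entropy duality gives
\[
\int_T^1\Sigma\,du\cdot\log\frac{\int_T^1\Sigma\,du}{1-T}\le\int_T^1\Sigma\log\Sigma\,du .
\]
So if $a_T:=\mathbb E\int_T^1\Sigma\,du$ satisfied $a_T|\log(1-T)|\not\to0$, then $\mathbb E\int_T^1\Sigma\log\Sigma\,du\ge a_T\log\frac{a_T}{1-T}$ would stay positive. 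This contradicts finite cost, since the tail of a convergent integral tends to $0$. I expect this tail estimate to be the main obstacle and will handle it carefully, possibly along a subsequence $T_k\to1$. It gives $\bar v\le v$.

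\emph{Attainment and uniqueness.} For \eqref{eq:WFdiffusion}, the SDE has a unique strong solution on $[t,1)$: the coefficient is H\"older-$\frac12$, so Yamada--Watanabe applies. The solution is a bounded martingale absorbed at $\{0,1\}$; $X_1\in\{0,1\}$ because $\mathbb E[X_s(1-X_s)]=\mathbb E[Y_s](1-s)$ with $Y_s:=\frac{X_s(1-X_s)}{1-s}$. It\^o's formula shows $dY_s=\frac{1-2X_s}{1-s}\sqrt{\cdots}\,dB_s$ (the drift cancels), so $Y$ is a local martingale and indeed a true martingale, being bounded on $[t,T]$. Hence $\mathbb E[X_T(1-X_T)]=(1-T)\frac{x(1-x)}{1-t}\to0$, so the diffusion belongs to $\mathcal M^{win}_{t,x}$. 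The It\^o inequality above is then an equality, and the boundary term equals $f$-terms plus $-\frac12\log(1-T)(1-T)C\to0$, so the cost equals $\bar v$. For uniqueness, equality for another optimizer forces $\int(\text{convexity gap})\,du=0$. Strict convexity then gives $\Sigma_u=\frac{X_u(1-X_u)}{1-u}$ a.e., so the optimizer solves the same SDE, and uniqueness in law for that SDE identifies it.

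\emph{Derivative formula.} By Lemma~\ref{lem:swplimit}, with $\mu=\delta_x$ and $\nu=x\delta_1+(1-x)\delta_0$ we have $MT_{re}=v(0,x)$, since the $1-\Sigma$ part was dropped consistently. It remains to verify
\[
\mathbb E\int_0^1 Y_s^{1+\epsilon/2}\,ds<\infty .
\]
I will bound $Y_s^{1+\epsilon/2}\le Y_s\cdot Y_s^{\epsilon/2}$ and estimate $\mathbb E[Y_s^{q}]$ via the moment ODE: applying It\^o to $X^k(1-X)^k$ gives closed linear equations, showing $\mathbb E[(X_s(1-X_s))^{q}]\lesssim(1-s)$. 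This yields $\mathbb E[Y_s^{1+\epsilon/2}]\lesssim(1-s)^{-\epsilon/2}$, which is integrable on $[0,1]$.
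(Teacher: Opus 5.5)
Your proof is correct, but its core differs from the paper's. The paper identifies $v=\bar v$ through viscosity theory: it asserts that $v$ is a viscosity solution of \eqref{eq:HJB} and proves comparison by shifting the subsolution in time by $\lambda$, to stay away from the $+\infty$ terminal datum, and then doubling variables. It defers uniqueness of the optimizer to \cite{BBexciting}. You instead run a classical verification argument with the explicit $\bar v$, which is $C^{1,2}$ in the interior, so the only delicate point is the terminal term $-\tfrac12\log(1-T)\,\mathbb E[X_T(1-X_T)]$. Your treatment of it is sound: $a_T:=\mathbb E[X_T(1-X_T)]=\mathbb E\int_T^1\Sigma_u\,du\to0$ gives $a_T\log a_T\to0$, and hence $a_T|\log(1-T)|\le\mathbb E\int_T^1\Sigma_u\log\Sigma_u\,du-a_T\log a_T\to0$ for every competitor of finite cost. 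This is a full limit, so no subsequence is needed.

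Your route is self-contained. It bypasses the regularity of $v$ and the comparison principle for a degenerate equation with singular terminal data, both of which the paper only sketches. It also gives uniqueness of the optimizer at no extra cost: the expected convexity gap must vanish, strict convexity forces $\Sigma_u=X_u(1-X_u)/(1-u)$ a.e., and the competitor is then a weak solution of \eqref{eq:WFdiffusion}, whose law is unique by Yamada--Watanabe. The paper's route gives in addition the PDE statement that $v$ is the unique viscosity solution, which does not depend on first guessing a classical solution.

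Two small remarks. First, your pointwise inequality has a sign slip and should read $\tfrac12\Sigma\log\Sigma+\tfrac12\Sigma\,\partial_x^2\bar v\ge-\partial_t\bar v$; the inequality you derive from it is correct. Second, for the integrability required in Lemma~\ref{lem:swplimit}, the paper uses the spectral expansion of the Wright--Fisher transition density, but even your moment ODE is unnecessary. Since $X(1-X)\le\frac14$, you get $\mathbb E[Y_s^q]\le4^{1-q}(1-s)^{-q}\,\mathbb E[X_s(1-X_s)]=4^{1-q}x(1-x)(1-s)^{1-q}$, which is integrable on $[0,1)$ for every $q\in[1,2)$.
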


\begin{remark}
    There is another way of finding the optimizer by calculus of variation. According to the first order (necessary optimality) condition as in \cite{BBexciting}, we know that at the optimum
$$ t \mapsto \sigma_t^2 (\log(\sigma_t^2)+1) - \sigma_t^2 \log (\sigma_t^2)=\sigma_t^2 $$
is a martingale, which implies that 
\begin{align*}
    0=\partial_t \Sigma_t(x)+ \frac{1}{2} \Sigma_t(x) \, \partial_x^2\Sigma_t(x)
\end{align*}
Assuming $\Sigma_t(x)=\frac{g(x)}{1-t}$, this reduces to 
\begin{align*}
    0=\frac{g(x)}{(1-t)^2}+\frac{1}{2}\frac{g''(x)f(x)}{(1-t)^2}, 
\end{align*}
and hence $g''(x)=-2$. We get that $g(x)=x(1-x)$, and thus after time scaling the optimal martingale becomes the neutral Wright-Fisher diffusion. Incidentally, reading this argument from the bottom up shows that $(\Sigma_t)_t$ is a martingale for the instantaneous quadratic variation of the scaled neutral Wright-Fisher diffusion. 
\end{remark}

 We first verify that the value of the scaled neutral Wright-Fisher diffusion is exactly $\bar v(t,x)$:

\begin{prop}\label{prop:WF}
    Suppose $\bar{\mathbb Q} $ is the distribution of the scaled neutral Wright-Fisher diffusion \eqref{eq:WFdiffusion}. Then we have the value associated with $\bar{\mathbb Q}$, i.e., 
    \begin{align*}
    \bar v(t,x)= &\frac{1}{2}\mathbb E_{\bar{\mathbb Q}} \left[\int_t^1 \Sigma_u \log(\Sigma_u) \, du \right] \\
    = & -\left( \frac{1}{4}x^2 \log(x^2)+\frac{1}{4}(1-x)^2 \log ((1-x)^2)+x(1-x) \right)-\frac{1}{2}\log(1-t) x(1-x). 
    \end{align*}
Moreover, $\bar v$ satisfies the HJB equation \eqref{eq:HJB}, and $ -\infty < v(t,x) \leq \bar v(t,x)<+\infty$ for any $(t,x) \in [0,1) \times [0,1]$.
\end{prop}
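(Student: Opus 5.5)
Two facts already in the text drive the argument: $\bar v$ solves the HJB equation \eqref{eq:HJB}, and $\Sigma^*_t(x)=x(1-x)/(1-t)$ makes the Hamiltonian's infimum attained. The plan is a verification argument along the Wright--Fisher diffusion $M$ started at $(t,x)$.

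\textbf{Step 1 (algebra).} Differentiate $f$ twice and check $x(1-x)=\exp(-f''(x)-1)$. The computation is $f''(x)=-\tfrac12\log x^2-\tfrac12\log(1-x)^2-1$, so $e^{-f''-1}=|x||1-x|$. Then, since $\partial_x^2\bar v=f''-\log(1-t)$, we get $e^{-\partial_x^2\bar v-1}=\frac{x(1-x)}{1-t}$ and $-\partial_t\bar v=-\frac{x(1-x)}{2(1-t)}$. This is exactly \eqref{eq:HJB} on $[0,1)\times(0,1)$. The boundary conditions $\bar v(t,0)=\bar v(t,1)=0$ and $\bar v(1^-,x)=+\infty$ for $x\in(0,1)$ are immediate.

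\textbf{Step 2 (Itô along $M$).} Fix $s<1$ and apply Itô's formula to $\bar v(u,M_u)$ on $[t,s]$. With $\Sigma_u=\frac{M_u(1-M_u)}{1-u}$ and the identity $\tfrac12\Sigma_u\partial_x^2\bar v(u,M_u)=-\tfrac12\Sigma_u\log\Sigma_u-\tfrac12\Sigma_u$, the HJB equation gives
\[
\bar v(t,x)=\mathbb E\Big[\bar v(s,M_s)\Big]+\frac12\mathbb E\Big[\int_t^s \Sigma_u\log\Sigma_u\,du\Big].
\]
The stochastic integral is a true martingale, because $\partial_x\bar v$ has only logarithmic singularities at $0,1$ and $\sigma$ vanishes there. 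A localization by exit times from $[\epsilon,1-\epsilon]$, followed by dominated convergence, handles this, since $x\log x$ is bounded on $[0,1]$.

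\textbf{Step 3 (limit $s\to1$) --- the main obstacle.} We need $\mathbb E[\bar v(s,M_s)]\to 0$ and convergence of the integral.
\begin{itemize}
\item The term $-f(M_s)$ tends to $f(M_1)=0$ by bounded convergence, since $M_1\in\{0,1\}$.
\item The delicate term is $-\tfrac12\log(1-s)\,\mathbb E[M_s(1-M_s)]$. Here I would use the observation from the Remark: $\Sigma_u$ is a martingale, which can be checked directly by Itô, since $d(M(1-M))=(1-2M)\,dM-\Sigma\,du$. Hence $\mathbb E[M_s(1-M_s)]=x(1-x)\frac{1-s}{1-t}$, and the term is of order $(1-s)\log(1-s)\to0$.
\end{itemize}
For the integral, split $\Sigma\log\Sigma$ into positive and negative parts and use monotone convergence on each. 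The negative part is bounded by $e^{-1}$, so it causes no trouble. The positive part's expectation then converges to a finite limit, because the left side is finite. This yields the claimed identity $\bar v(t,x)=\frac12\mathbb E_{\bar{\mathbb Q}}[\int_t^1\Sigma_u\log\Sigma_u\,du]$, including finiteness.

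\textbf{Step 4 (bounds on $v$).} First I would confirm $\bar{\mathbb Q}\in\mathcal M^{win}_{t,x}$. That $M$ is a $[0,1]$-valued martingale is clear. That $M_1\in\{0,1\}$ follows from $\mathbb E[M_1(1-M_1)]=\lim_{s\to1}\mathbb E[M_s(1-M_s)]=0$, by the computation above. Then $v\le\bar v<\infty$. For the lower bound, $\Sigma\log\Sigma\ge -e^{-1}$ gives $v(t,x)\ge -\frac{1-t}{2e}>-\infty$. At $x\in\{0,1\}$ the only win-martingale is constant, so $v=\bar v=0$ there.
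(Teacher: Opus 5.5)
Your proof is correct, but it is organized differently from the paper's.

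\textbf{The paper's route.} The paper never uses the HJB equation to evaluate the cost. It splits $\log\Sigma_u=\log M_u+\log(1-M_u)-\log(1-u)$. The first two pieces are handled by It\^o's formula applied to the auxiliary function $\frac14x^2\log(x^2)-\frac34x^2$, whose second derivative is $\log x$, evaluated at $M$ and at $1-M$. The third piece is handled through $\mathbb E[\Sigma_u]=\frac{x(1-x)}{1-t}$. The three contributions are then summed, and the HJB equation is checked afterwards by direct differentiation, as a separate fact.

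\textbf{Your route.} You verify the HJB equation first and then run a verification argument. A single application of It\^o's formula to $\bar v(u,M_u)$, with the infimum attained at $\Sigma^*$, turns the drift into $-\frac12\Sigma_u\log\Sigma_u$. This makes the link to the control problem transparent.

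\textbf{What your version adds.} You also carry out the technical steps the paper passes over:
\begin{enumerate}
\item localization at $\{0,1\}$, where second derivatives blow up;
\item the limit $s\to1$, with the boundary term $-\frac12\log(1-s)\,\mathbb E[M_s(1-M_s)]=O((1-s)\log(1-s))$;
\item the positive/negative-part argument that yields finiteness of the cost;
\item an explicit It\^o check that $\Sigma$ is a martingale;
\item a proof that $M_1\in\{0,1\}$, i.e.\ that $\bar{\mathbb Q}$ really is a win-martingale, which the paper simply asserts.
\end{enumerate}
For the lower bound, the paper uses $\Sigma\log\Sigma\ge\Sigma-1$ together with $\mathbb E\int_t^1\Sigma_u\,du=x(1-x)$. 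You use $\Sigma\log\Sigma\ge -e^{-1}$. Neither bound dominates the other, and either suffices for $v>-\infty$.

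\textbf{Two small slips.}
\begin{enumerate}
\item Since $\partial_x^2[x(1-x)]=-2$, you have $\partial_x^2\bar v=f''+\log(1-t)$, not $f''-\log(1-t)$. Your conclusion $e^{-\partial_x^2\bar v-1}=\frac{x(1-x)}{1-t}$ requires the plus sign, so this is only a typo.
\item $\partial_x\bar v$ is actually bounded on $[t,s]\times[0,1]$, since it only involves $x\log x$ and $(1-x)\log(1-x)$. The logarithmic singularity that forces localization before applying It\^o's formula sits in $\partial_x^2\bar v$, not $\partial_x\bar v$.
\end{enumerate}
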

\begin{proof}
By the definition of Wright-Fisher diffusion, we have $\Sigma_u= \frac{M_u(1-M_u)}{1-u}$, where 
$M_u$ satisfies \eqref{eq:WFdiffusion}. Thus $\log(\Sigma_u)= \log(M_u)+\log(1-M_u)- \log(1-u)$. Define a function $f(x)=\frac{1}{4}x^2\log(x^2)-\frac{3}{4} x^2$, $x \in \mathbb R_+$, whose second order derivative is $ \log(x)$. According to It\^{o}'s formula, we have that 
\begin{align*}
    f(M_1)-f(M_t)= \int_t^1 f'(M_u) \,dM_u+ \frac{1}{2} \int_t^1 \log(M_u) \Sigma_u \,du.
\end{align*}
As $f'(x)$ is bounded for $x \in [0,1]$, the stochastic integral above is actually a martingale. Therefore,
\begin{align*} 
\frac{1}{2}\mathbb E \left[\int_t^1 \Sigma_u \log(M_u) \,du \right]= \mathbb E[ f(M_1)-f(M_t)]=-\frac{3x}{4}-\frac{1}{4}x^2 \log(x^2)+\frac{3}{4}x^2.
\end{align*}
Similarly, we obtain that 
\begin{align*}
    \frac{1}{2}\mathbb E \left[\int_t^1 \Sigma_u \log(1-M_u) \,du \right]& =\mathbb E[f(1-M_1)-f(1-M_t)] \\
    &=-\frac{3(1-x)}{4}- \frac{1}{4}(1-x)^2\log((1-x)^2)+\frac{3}{4}(1-x)^2. 
\end{align*}
Since $\Sigma_u$ is a martingale, $\mathbb E[\Sigma_u ]=\frac{x(1-x)}{1-t}$ for $u \in [t,1]$, so that 
\begin{align*}
    -\frac{1}{2} \mathbb E \left[\int_t^1 \Sigma_u \log(1-u) \, du \right]= -\frac{x(1-x)}{2}(\log(1-t)-1). 
\end{align*}
Summing them up, we get 
\begin{align*}
\bar v(t,x)=  -\left( \frac{1}{4}x^2 \log(x^2)+\frac{1}{4}(1-x)^2 \log ((1-x)^2)+x(1-x) \right)-\frac{1}{2}\log(1-t) x(1-x). 
\end{align*}

By direct computation, one can verify that $\bar v$ satisfies the HJB equation \eqref{eq:HJB}. As \eqref{eq:WFdiffusion} is a win-martingale, $v(t,x) \leq \bar v(t,x)<+\infty$ over $(t,x) \in [0,1) \times [0,1]$. 
Finally, since $\Sigma \log(\Sigma)-\Sigma+1 \geq 0$ for any $\Sigma \geq 0$, we have that for any $\mathbb Q \in \mathcal{M}_{t,x}^{win}$
\begin{align*}
    \frac{1}{2}\mathbb E_{{\mathbb Q}} \left[\int_t^1 \Sigma_u \log(\Sigma_u) \, du \right] \geq \frac{1}{2} \mathbb E_{\mathbb Q} \left[\int_t^1 \Sigma_u -1 \, du \right]=\frac{x(1-x)-(1-t)}{2}.
\end{align*}
Hence $v(t,x) > -\infty$ for all $(t,x) \in [0,1) \times [0,1]$.

\end{proof}

For the proof of uniqueness, we consider a class of equations indexed by $\lambda \in (0,1)$ 
\begin{align}\label{eq:gHJB}
\begin{cases}
  -\partial_t v(t,x) = - \frac{1}{2} e^{- \partial_x^2 v(t,x)-1} \, \, \, (t,x) \in [\lambda,1) \times (0,1), \\
  \hspace{12pt} v(1,x) = +\infty,   \quad  \quad \quad \hspace{6pt} (1,x) \in \{1\} \times  (0,1), \\
  \hspace{14pt} v(t,x) = 0,  \quad \quad \quad \quad \ \ \,\,  (t,x) \in [\lambda,1) \times \{0,1\}.
\end{cases}
\end{align}

\begin{defn}
    A continuous function $u_1:[\lambda,1) \times [0,1]\to \mathbb R$ is said to be a viscosity sub-solution to \eqref{eq:gHJB} if 
    \begin{enumerate}
        \item[(i)] $u_1(t,x) \leq 0$ for any $(t,x) \in [\lambda,1) \times \{0,1\}$; 
        \item[(ii)] For any test function $\phi \in C^2 ([\lambda,1) \times [0,1]; \mathbb R)$, and any local maximum $(t_0,x_0) \in [\lambda,1) \times (0,1)$ of $u_1-\phi$, we have 
        \begin{align*}
        -\partial_t \phi(t_0,x_0) \leq -\frac{1}{2}\exp(-\partial_x^2\phi(t_0,x_0)-1).
        \end{align*}
    \end{enumerate}
A continuous function $u_2:[\lambda,1) \times [0,1] \to \mathbb R$ is said to be a viscosity super-solution to \eqref{eq:gHJB} if 
\begin{enumerate}
    \item [(i)] $u_2(t,x) \geq 0$ for any $(t,x) \in [\lambda,1) \times \{0,1\}$, and $\limsup_{(t,y) \to (1,x)} u_2 (t,y) =+\infty$, $\forall x \in (0,1)$;
    \item [(ii)]
    For any test function $\phi \in C^2 ([\lambda,1) \times [0,1]; \mathbb R)$, and any local minimum $(t_0,x_0) \in [\lambda,1) \times (0,1)$ of $u_2-\phi$, we have 
        \begin{align*}
        -\partial_t \phi(t_0,x_0) \geq -\frac{1}{2}\exp(-\partial_x^2\phi(t_0,x_0)-1).
        \end{align*}
\end{enumerate}
\end{defn}

\begin{lem}\label{lem:terminal}
 The value function $v$ of \eqref{eq:WFvalue} satisfies that 
 \begin{align*}
     v(t,x)+\frac{1}{2}\log(1-t)x(1-x)= v(s,x)+\frac{1}{2}\log(1-s)x(1-x), \quad \forall t,x \in [0,1), \, x \in [0,1]. 
 \end{align*}
\end{lem}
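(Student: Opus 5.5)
We prove the identity by a deterministic time change that maps $\mathcal{M}_{t,x}^{win}$ bijectively onto $\mathcal{M}_{s,x}^{win}$ and tracks how the cost transforms under it. Fix $t,s\in[0,1)$ and $x\in[0,1]$; the cases $x\in\{0,1\}$ are trivial, since then the martingale is constant, both sides equal $0$, and $x(1-x)=0$. Set $c:=\frac{1-t}{1-s}>0$ and define the affine bijection $\tau:[s,1]\to[t,1]$ by $\tau(r):=t+c(r-s)$, so that $\tau(s)=t$ and $\tau(1)=1$. Given $\mathbb Q\in\mathcal{M}_{t,x}^{win}$ with canonical process $X$, let $\tilde X_r:=X_{\tau(r)}$ for $r\in[s,1]$.

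The process $\tilde X$ is a continuous martingale in the time-changed filtration, because $\tau$ is deterministic and increasing. It starts at $x$ at time $s$ and ends in $\{0,1\}$ at time $1$, so its law $\tilde{\mathbb Q}$ lies in $\mathcal{M}_{s,x}^{win}$. Its quadratic variation is $\langle\tilde X\rangle_r=\langle X\rangle_{\tau(r)}$, which is absolutely continuous with density $\tilde\Sigma_r=c\,\Sigma_{\tau(r)}$. The inverse map $r\mapsto s+(r-t)/c$ gives the reverse correspondence, so $\mathbb Q\mapsto\tilde{\mathbb Q}$ is a bijection between the two classes.

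Next we compute the cost. Substitute $u=\tau(r)$, so $du=c\,dr$:
\begin{align*}
\frac12\int_s^1\tilde\Sigma_r\log\tilde\Sigma_r\,dr
&=\frac12\int_s^1 c\,\Sigma_{\tau(r)}\bigl(\log c+\log\Sigma_{\tau(r)}\bigr)dr\\
&=\frac12\int_t^1\Sigma_u\log\Sigma_u\,du+\frac{\log c}{2}\,\bigl(\langle X\rangle_1-\langle X\rangle_t\bigr).
\end{align*}
Take expectations. Because $X$ is a bounded martingale with values in $[0,1]$, It\^o's isometry gives $\mathbb E_{\mathbb Q}[\langle X\rangle_1-\langle X\rangle_t]=\mathbb E_{\mathbb Q}[X_1^2]-x^2=x-x^2=x(1-x)$, where we used $X_1\in\{0,1\}$ and $\mathbb E_{\mathbb Q}[X_1]=x$. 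Hence the cost of $\tilde{\mathbb Q}$ at $(s,x)$ equals the cost of $\mathbb Q$ at $(t,x)$ plus $\frac12\log\frac{1-t}{1-s}\,x(1-x)$.

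This additive constant does not depend on $\mathbb Q$, and the map is a bijection, so taking infima gives
\[
v(s,x)=v(t,x)+\tfrac12\bigl(\log(1-t)-\log(1-s)\bigr)x(1-x),
\]
which rearranges to the claimed identity. One should also confirm that both values are finite, so that adding a constant to an infimum is legitimate. This follows from Proposition~\ref{prop:WF}, which gives $-\infty<v<+\infty$ on $[0,1)\times[0,1]$.

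The main obstacle is a measure-theoretic point rather than the calculation. We must ensure the expectation of $\int\Sigma\log\Sigma$ is well defined, that is, that its negative part is integrable, so that splitting it as a sum of two expectations is valid. The negative part is integrable because $\Sigma\log\Sigma\ge -e^{-1}$ and the time interval is bounded. We must also check that membership in the win-martingale class, including absolute continuity of the quadratic variation, is preserved under the deterministic affine time change, which holds by the computation of $\tilde\Sigma$ above.
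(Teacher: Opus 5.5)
Your proposal is correct and follows essentially the paper's argument: the same deterministic affine time change $u\mapsto t+(u-s)\frac{1-t}{1-s}$, the same scaling of the quadratic-variation density by $\frac{1-t}{1-s}$, and the same use of $\mathbb E_{\mathbb Q}\bigl[\int_t^1\Sigma_u\,du\bigr]=x(1-x)$. The only difference is presentational: you use a cost-shifting bijection between $\mathcal{M}_{t,x}^{win}$ and $\mathcal{M}_{s,x}^{win}$ and take infima directly, whereas the paper takes an $\epsilon$-optimizer to get one inequality and then swaps the roles of $t$ and $s$.
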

\begin{proof}
      For any $\epsilon>0$, suppose $(M^t_u)_{u \in [t,1]}$ with distribution $\mathbb Q^t \in \mathcal{M}_{t,x}^{win}$ is a win-martingale, whose density of quadratic variation we denote $\Sigma^t$, such that 
\begin{align*}
    v(t,x) \leq \frac{1}{2}\E_{\mathbb Q^t}\left[\int_t^1 \Sigma^t_u \log (\Sigma^t_u)\, du \right] <v(t,x)+\epsilon. 
\end{align*}
Let us take $M^s_u= M^t_{t+(u-s)(1-t)/(1-s)}$ for $u \in [s,1]$, and denote its distribution by $\mathbb Q^s$ and the density of the quadratic variation by $\Sigma^s$. Therefore
\begin{align*}
v(s,x) \leq & \frac{1}{2} \E_{\mathbb Q^s} \left[\int_s^1 \Sigma^s_u \log( \Sigma^s_u ) \,du \right] \\
=&  \frac{1}{2}\E_{\mathbb Q^s} \left[\int_s^1 \frac{1-t}{1-s} \Sigma^t_{t+(u-s)(1-t)/(1-s)} \log \left (\frac{1-t}{1-s} \Sigma^t_{t+(u-s)(1-t)/(1-s)} \right ) \,du \right]\\
=& \frac{1}{2}\E_{\mathbb Q^t}\left[\int_t^1 \Sigma^t_u \log\left(  \Sigma^t_{u} \right) \,du \right] +\frac{1}{2} \left( \log(1-t)-\log(1-s)\right)  \E_{\mathbb Q^t} \left[\int_t^1      \Sigma_u^t \,du  \right]\\
\leq & v(t,x)+\epsilon+\frac{1}{2} (\log(1-t)-\log(1-s)) x(1-x). 
\end{align*}
Letting $\epsilon \to 0$, we get half of the equality. Exchanging the roles of $t$ and $s$, we finish proving the result. 
\end{proof}

\begin{prop}\label{prop:comparison}
    The value function $v$ defined by the control problem \eqref{eq:WFvalue} is the unique viscosity solution to \eqref{eq:HJB}. 
\end{prop}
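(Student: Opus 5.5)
The proof has two parts: (a) $v$ is a viscosity solution of \eqref{eq:HJB}, obtained from the dynamic programming principle; (b) a comparison principle on each strip $[\lambda,1)\times[0,1]$ for \eqref{eq:gHJB}, which forces any viscosity solution to agree with $v$. Lemma~\ref{lem:terminal} gives the exact time-structure $v(t,x)=v(0,x)-\frac12\log(1-t)x(1-x)$, which handles the singular terminal time. Proposition~\ref{prop:WF} provides finiteness and the explicit supersolution $\bar v$.

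\textbf{Step 1: $v$ is a viscosity solution.} First I establish continuity of $v$. Lemma~\ref{lem:terminal} reduces this to continuity of $x\mapsto v(0,x)$. For that I would use concatenation of near-optimal win-martingales, together with the bounds $\frac{x(1-x)-(1-t)}{2}\le v\le \bar v$ from Proposition~\ref{prop:WF}. These bounds also give the boundary values: both sides vanish on $\{0,1\}$ up to $O(1-t)$, and scaling via Lemma~\ref{lem:terminal} removes that error. They also show $\limsup_{t\to1}v=+\infty$ through the $-\frac12\log(1-t)x(1-x)$ term. Next I prove the dynamic programming principle $v(t,x)=\inf_{\mathbb Q}\mathbb E_{\mathbb Q}[\frac12\int_t^{\tau}\Sigma\log\Sigma\,du+v(\tau,M_\tau)]$ by measurable selection or concatenation, with $\tau$ a small time or exit time. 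The sub- and supersolution inequalities then follow in the standard way. For the subsolution property I use constant controls $\Sigma$ near $(t_0,x_0)$ and optimize over $\Sigma$, which recovers $-\frac12e^{-\phi_{xx}-1}$. For the supersolution property I argue by contradiction with a strict inequality on a small ball, applying Itô's formula to $\phi(u,M_u)$ and the pointwise bound $\frac12\Sigma\log\Sigma+\frac12\Sigma\phi_{xx}\ge-\frac12e^{-\phi_{xx}-1}$.

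\textbf{Step 2: comparison.} Let $u_1$ be a subsolution and $u_2$ a supersolution on $[\lambda,1)\times[0,1]$. I want $u_1\le u_2$. The terminal blow-up is the main obstacle, since a subsolution could also blow up at $t=1$. To handle it I would exploit the structure from Lemma~\ref{lem:terminal} and restrict to solutions $w$ with $w+\frac12\log(1-t)x(1-x)$ bounded. Alternatively, I would use the time-rescaling $u^\delta(t,x)=u_1(\lambda+(t-\lambda)(1-\delta),x)$ plus the corresponding $\log$ correction, which is again a subsolution up to an error vanishing as $\delta\to0$. This rescaled function stays finite at $t=1$ while $u_2\to+\infty$, so the maximum of $u^\delta-u_2$ is attained at $t<1$. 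At $t=\lambda$ the inequality must be supplied separately. The cleanest route is to prove comparison on $[\lambda,1)$ for every $\lambda$, with data at $t=\lambda$ treated as free, as in a backward parabolic problem where the "initial" condition sits at $t=1$.

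With the maximum at an interior point, I perturb by $\eta/(t-\lambda)$ or $\eta(1-t)$ to obtain strictness. Then I double variables using $\frac{|x-y|^2}{2\varepsilon}+\frac{|t-s|^2}{2\varepsilon}$ and apply the Crandall–Ishii lemma, which gives $X\le Y$ for the second-order parts. Since $p\mapsto-\frac12e^{-p-1}$ is increasing in $p=\phi_{xx}$, the equation is degenerate elliptic in the correct direction. The nondegenerate time-derivative term then yields the contradiction. The boundary points $x\in\{0,1\}$ are excluded because there $u_1\le0\le u_2$.

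Finally, $v$ is a solution, and any other solution $w$ satisfies both $w\le v$ and $v\le w$ by comparison. Hence $v$ is the unique viscosity solution, and since $\bar v$ is a classical and hence viscosity solution, $v=\bar v$. I expect the hardest part to be controlling behaviour near $t=1$. I would first try the time-rescaling trick, which is justified by the scaling identity in Lemma~\ref{lem:terminal}.
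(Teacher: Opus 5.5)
Your proposal is correct and has the same architecture as the paper's proof. First, $v$ is a viscosity solution by the standard dynamic-programming argument, with Lemma~\ref{lem:terminal} giving the blow-up at $t=1$. Second, uniqueness comes from a comparison principle: the subsolution is modified to stay bounded up to $t=1$, then variables are doubled and a limit is taken. Your Step~1 also spells out continuity of $v$, which the paper calls standard. You obtain the boundary values from the bounds of Proposition~\ref{prop:WF} together with the scaling identity; this is a valid alternative to the paper's observation that a win-martingale started in $\{0,1\}$ is constant.

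The real difference is the device used at the terminal singularity. The paper uses the time shift $u_1^\lambda(t,x)=u_1(t-\lambda,x)$. It is a subsolution on $[\lambda,1)$ simply because the equation has no explicit $t$-dependence, it is bounded on $[\lambda,1]\times[0,1]$, and it tends to $u_1$ as $\lambda\to0$. You use the dilation $u_1(\lambda+(t-\lambda)(1-\delta),x)$ plus the correction $\frac{\log(1-\delta)}{2}x(1-x)$, which is non-positive and vanishes at $x\in\{0,1\}$. This is in fact an \emph{exact} subsolution, not one "up to an error vanishing as $\delta\to0$". If $\phi$ touches it from above, the pulled-back test function for $u_1$ has time derivative $\partial_t\phi/(1-\delta)$ and second derivative $\phi_{xx}+\log(1-\delta)$. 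The subsolution inequality for $u_1$ then reads $-\partial_t\phi/(1-\delta)\le-\frac{1}{2(1-\delta)}e^{-\phi_{xx}-1}$, which is exactly the required inequality. Your route exploits the same scaling invariance as Lemma~\ref{lem:terminal}; the paper's shift is simpler because it needs no correction term.

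Your other option, restricting to solutions with $w+\frac12\log(1-t)x(1-x)$ bounded, would only prove uniqueness within a subclass. The rescaling (or the shift) is therefore the one to use.

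One caveat applies equally to you and to the paper. Placing the maximum of the modified subsolution minus $u_2$ at some $t<1$ needs two things that neither argument verifies:
\begin{itemize}
\item $u_2\to+\infty$ locally uniformly as $t\to1$, i.e.\ a $\liminf$, whereas the definition only postulates a $\limsup$;
\item a lower bound on $u_2$ near the corners $(1,0)$ and $(1,1)$.
\end{itemize}
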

\begin{proof}
    By a standard argument, the value function $v$ is a viscosity solution to 
    \begin{align*}
        -\partial_t v(t,x)= -\frac{1}{2}e^{-\partial_x^2 v(t,x) -1}. 
    \end{align*}
    Thanks to Lemma~\ref{lem:terminal}, $v$ satisfies the terminal condition $v(1,x)=+\infty$ for $x \in (0,1)$. Moreover, for any win-martingale $M$ with $M_t \in \{0,1\}$, $M_u=M_t$ for $u \in [t,1]$ and thus $\Sigma_u=0$ and $\Sigma_u \log(\Sigma_u)=0$ for $u \in [t,1]$. Therefore $v(t,0)=v(t,1)=0$ for $t \in [0,1)$, and $v$ satisfies the boundary condition. Thus $v$ is a viscosity solution to \eqref{eq:HJB}. 

    Let us now prove the uniqueness. Take $u_1$ and $u_2$ to be a continuous viscosity sub-solution and super-solution respectively. For any small $\lambda>0$, define $u_1^{\lambda}(t,x)=u_1(t-\lambda,x)$ for $(t,x) \in [\lambda,1) \times [0,1]$. It can be easily verified $u_1^{\lambda}$ is a continuous viscosity sub-solution to \eqref{eq:gHJB}. As $u_1$ is continuous over $[0,1) \times [0,1]$, $u^{\lambda}_1$ is uniformly continuous over the compact domain $[\lambda,1] \times [0,1]$. So the maximum of $u_1^{\lambda} - u_2$ must be obtained before terminal time $1$. Now by a standard argument of doubling variable $u_1^{\lambda} \leq u_2$ over $[\lambda,1) \times [0,1]$, which holds for any $\lambda \in (0,1)$. Thanks to the continuity of $u_1$, we get that $u_1=\lim\limits_{\lambda \to} u_1^{\lambda} \leq u_2$ over $[0,1) \times [0,1]$, and thus finish proving the comparison principle. 
\end{proof}

Let us apply Lemma~\ref{lem:swplimit} to obtain \eqref{eq:derivativeswp}. For that purpose, we need to verify the integrability of the scaled neutral Wright-Fisher diffusion. 

\begin{prop}\label{prop:moment}
Suppose $\bar{\mathbb Q}$ is the distribution of the scaled neutral Wright-Fisher diffusion. Then 
\begin{align}\label{eq:momentbound}
\mathbb E_{\bar{\mathbb Q}} \left[ \int_0^1 \Sigma_t^{1.5} \, dt \right] <+\infty. 
\end{align}
\end{prop}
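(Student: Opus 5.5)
The plan is to reduce \eqref{eq:momentbound} to a first-moment computation, using only the boundedness of $M_t(1-M_t)$ and the explicit decay of $\mathbb E_{\bar{\mathbb Q}}[M_t(1-M_t)]$. Write $g_t:=M_t(1-M_t)$, where $M$ solves \eqref{eq:WFdiffusion} started at $M_0=x$. Then $\Sigma_t=g_t/(1-t)$, so that
\begin{align*}
\Sigma_t^{1.5}=\frac{g_t^{1.5}}{(1-t)^{1.5}}.
\end{align*}
Since $M_t\in[0,1]$, we have $0\le g_t\le \tfrac14$, and therefore $g_t^{1.5}\le \tfrac12\, g_t$. The only remaining task is to show that $\mathbb E_{\bar{\mathbb Q}}[g_t]$ decays at least like $(1-t)$.

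First, I would compute $\mathbb E_{\bar{\mathbb Q}}[g_t]$ exactly. By It\^o's formula applied to $g(x)=x(1-x)$, whose second derivative is $g''=-2$,
\begin{align*}
dg_t=(1-2M_t)\,dM_t-\Sigma_t\,dt=(1-2M_t)\,dM_t-\frac{g_t}{1-t}\,dt .
\end{align*}
On $[0,T]$ with $T<1$, the integrand $(1-2M_t)$ is bounded and $\Sigma_t\le \tfrac{1}{4(1-T)}$ is bounded, so the stochastic integral is a true martingale. Taking expectations gives the linear ODE
\begin{align*}
\frac{d}{dt}\mathbb E[g_t]=-\frac{\mathbb E[g_t]}{1-t},
\end{align*}
whose solution is $\mathbb E[g_t]=x(1-x)(1-t)$. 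This is the same fact as the martingale property of $\Sigma$ already used in the proof of Proposition~\ref{prop:WF}, and one could simply invoke that fact instead. Here it is derived directly so that this step does not depend on the earlier proposition.

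Next, I would combine the two bounds. By Tonelli's theorem,
\begin{align*}
\mathbb E_{\bar{\mathbb Q}}\left[\int_0^1\Sigma_t^{1.5}\,dt\right]\le \frac12\int_0^1\frac{\mathbb E[g_t]}{(1-t)^{1.5}}\,dt=\frac{x(1-x)}{2}\int_0^1(1-t)^{-1/2}\,dt=x(1-x)<+\infty .
\end{align*}
This establishes \eqref{eq:momentbound}. It also verifies the integrability hypothesis of Lemma~\ref{lem:swplimit} with $\epsilon=1$, or with any $\epsilon\in(0,1)$ after using $\Sigma^{1+\epsilon/2}\le 1+\Sigma^{1.5}$.

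There is no serious obstacle in this argument. The only point requiring care is justifying that the local martingale in the It\^o expansion is a true martingale up to every time $T<1$, which the boundedness noted above handles. If a sharper exponent were ever needed, I would instead control $\mathbb E[g_t^2]$ through the identity $\tfrac12 g\,(g^2)''=g-6g^2$ and then interpolate with H\"older's inequality, but the crude bound $g^{1.5}\le\tfrac12 g$ already suffices for \eqref{eq:momentbound}.
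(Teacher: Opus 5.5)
Your proof is correct, and it takes a genuinely different and much more elementary route than the paper.

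The paper proceeds in two stages. It applies It\^o's formula to $\Sigma_t^{1.5}$, bounds the drift by $c\,\Sigma_t^{0.5}(1-t)^{-2}$, and integrates in time to reduce the claim to $\int_0^1 \mathbb E[\Sigma_t^{0.5}](1-t)^{-1}\,dt<\infty$. It then time-changes to the standard neutral Wright--Fisher diffusion and bounds $\mathbb E[\sqrt{\tilde M_t(1-\tilde M_t)}]$ via the Jacobi-polynomial eigenfunction expansion of its transition density.

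You stay at the exponent $1.5$. The pointwise bound $g^{1.5}\le\tfrac12 g$ on $[0,\tfrac14]$, together with the exact identity $\mathbb E[g_t]=x(1-x)(1-t)$ (the martingale property of $\Sigma$), gives $\mathbb E[\Sigma_t^{1.5}]\le\tfrac12x(1-x)(1-t)^{-1/2}$ and the explicit bound $x(1-x)$.

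The paper's reduction trades a superlinear power of $g$ for the sublinear one $g^{0.5}$, where the first-moment identity no longer suffices. Jensen only gives $\mathbb E[g_t^{0.5}]\le\sqrt{x(1-x)}\,(1-t)^{1/2}$, which leaves the non-integrable weight $(1-t)^{-1}$. This is why the paper needs the spectral expansion: it encodes that $\mathbb E[g_t^{0.5}]$ decays like $1-t$, the order of the non-absorption probability.

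Your route buys self-containedness and an explicit constant. It also gives, for free, $\mathbb E_{\bar{\mathbb Q}}\bigl[\int_0^1\Sigma_t^q\,dt\bigr]\le 4^{1-q}x(1-x)/(2-q)$ for all $q\in[1,2)$. This range is sharp. Your own identity $\tfrac12 g\,(g^2)''=g-6g^2$ gives $\mathbb E[g_t^2]=\tfrac{x(1-x)}{5}(1-t)+\bigl(x^2(1-x)^2-\tfrac{x(1-x)}{5}\bigr)(1-t)^6$. Hence $\int_0^1\mathbb E[\Sigma_t^2]\,dt=+\infty$ for $x\in(0,1)$, so the ``sharper exponent'' idea in your last paragraph cannot go beyond $q<2$.

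The spectral approach yields finer distributional information, but at the cost of heavier machinery. As written, its series bound $\sum_n e^{-n(n+1)t}n(n+1)(2n+1)$ blows up like $t^{-2}$ as $t\to0$. It must therefore be combined with the trivial bound $\tfrac12$ for small $t$, a delicate point your argument avoids entirely.
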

\begin{proof}
Recall that $dM_t=\sqrt{\frac{M_t(1-M_t)}{1-t}} \, dB_t$, where the instantaneous quadratic variation $\Sigma_t=\frac{M_t(1-M_t)}{1-t}$ is a martingale over time $[0,1)$. For any $p \in (0,\infty)$, we compute 
\begin{align*}
d  \Sigma_t=& \frac{1-2M_t}{1-t} \, dM_t, \\
d \langle \Sigma \rangle_t =&\frac{(1-2M_t)^2 \Sigma_t}{(1-t)^2} \, dt ,  \\
d(\Sigma_t)^p=& p (\Sigma_t)^{p-1} d \Sigma_t+ \frac{p(p-1)}{2}\frac{\Sigma_t^{p-1}(1-2M_t)^2}{(1-t)^2} \, dt.
\end{align*}
Taking $p=1.5$, with some positive constant $c_{1.5}$ 
$$
\mathbb E[\Sigma_t^{1.5}] \leq  \Sigma_0^{1.5}+c_{1.5}\mathbb E \left[\int_0^t \frac{\Sigma_u^{0.5}}{(1-u)^2}   \, du \right]=\Sigma_0^{1.5}+c_{1.5}  \int_0^t \frac{\mathbb E[\Sigma_u^{0.5}] }{(1-u)^2} \,du, \quad \forall t <1.$$
Therefore, we get that 
\begin{align*}
    \int_0^{1-\epsilon} \mathbb E[\Sigma_t^{1.5}] \, dt & =C + c_{1.5} \int_0^{1-\epsilon} \frac{\mathbb E [\Sigma_t^{0.5}]}{(1-t)^2}(1-t -\epsilon) \, dt  \\
    &= C + c_{1.5} \int_0^{1-\epsilon} \frac{\mathbb E [\Sigma_t^{0.5}] }{1-t} \, dt -\epsilon c_{1.5}  \int_0^{1-\epsilon} \frac{\mathbb E [\Sigma_t^{0.5}] }{(1-t)^2} \, dt,
\end{align*}
where $C$ is a positive constant. Therefore to show \eqref{eq:momentbound}, it is sufficient to prove that 
$$\int_0^1 \frac{\mathbb E[\Sigma_t^{0.5}]}{1-t} \, dt<+\infty. $$

Let us transfer $\int_0^{1} \frac{\mathbb E [\sigma_t] }{1-t} \, dt$ to the density estimate of standard neutral Wright-Fisher diffusion. Take $s: [0,\infty) \to [0,1)$, $t \mapsto 1-e^{-t}$, and define $\tilde M_t:=M_{s(t)}$. Then it can be seen that 
\begin{align*}
    d\tilde M_t= \sqrt{\tilde M_t (1-\tilde M_t)}\, d\tilde B_t,
\end{align*}
for some Brownian motion $\tilde B_t$, and $\tilde M_t$ is a standard neutral Wright-Fisher diffusion. Moreover, the instantaneous quadratic variation process of $\tilde M$  is given by $\tilde \Sigma_t= e^{-t}\Sigma_{s(t)}$, and therefore 
\begin{align}\label{eq:standardWF}
   \mathbb E  \int_0^{1} \frac{\sigma_s}{1-s} \, ds&=\mathbb E \int_0^\infty \frac{ \sigma_{s(t)}}{1-s(t)} e^{-t}\,dt  \\
   &= \mathbb E\int_0^\infty e^{t/2} \tilde \sigma_t \,dt=\mathbb E \int_0^{\infty} e^{t/2} \sqrt{\tilde M_t(1-\tilde M_t) }  \, dt. \notag
\end{align}

Denote by $\rho(t,x,y)$ the density of $\tilde M_t$ given $\tilde M_0=x$. By \cite[Chapter 15, Section 13.F]{KaTa81},
\begin{align*}
    \rho(t,x,y)=\frac{1}{y(1-y)}\sum_{n=1}^{\infty} e^{-n(n+1)t} \varphi_{n-1}(x) \varphi_{n-1}(y) n(n+1)(2n+1),
\end{align*}
with $\varphi_n(x)=x(1-x)P^{(1,1)}_{n-1}(1-2x)$, where $P^{(1,1)}_{n-1}$ are the Jacobi orthogonal polynomials normalized so that $P^{(1,1)}_{n-1}(1)=1$; see \cite[Chapter IV]{Sz39}. Note that $|P^{(1,1)}_{n-1}(-1)|=|P^{(1,1)}_{n-1}(1)| \geq |P^{(1,1)}_{n-1}(x)|$, $\forall x \in [0,1]$, and hence with $g(y):=\sqrt{y(1-y)}$, $y \in [0,1]$, we get 
\begin{align*}
    \mathbb E \left[\sqrt{\tilde M_t(1-\tilde M_t)} \, \Big| \, \tilde M_0=x\right]&=\int g(y) \rho(t;x,y) \, dy \\
    &=\sum_{n=1}^{\infty}e^{-n(n+1) t } n(n+1)(2n+1) \varphi_{n-1}(x) \int_0^1 g(y) P_{n-2}^{(1,1)}(1-2y) \,dy \\
    & \leq \sum_{n=1}^{\infty}e^{-n(n+1) t } n(n+1)(2n+1). 
\end{align*}
It is clear now that the integration in \eqref{eq:standardWF} is finite, concluding the argument. 
\end{proof}

\begin{proof}[Proof of Theorem~\ref{thm}]
    According to Proposition~\ref{prop:WF} and Proposition~\ref{prop:comparison}, $v(t,x)=\bar v(t,x)$ and the scaled Wright-Fisher diffusion \eqref{eq:WFdiffusion} is an optimizer of the control problem \eqref{eq:WFvalue}. For the uniqueness, we can argue as just in \cite{BBexciting}. In the end, Lemma~\ref{lem:swplimit} and Proposition~\ref{prop:moment} conclude \eqref{eq:derivativeswp}. 
\end{proof}

\begin{remark}\label{eq:multidim}
    It is not immediately clear how to identify the multidimensional analogue of the reciprocal specific relative entropy. One natural candidate, in the authors' opinion, arises when we consider the quantum (von Neumann) relative entropy between positive semidefinite matrix-valued measures on $[0,1]$, namely $$S(M||N):= \int_0^1tr(M_t\{\log(M_t)-\log(N_t)\}+N_t-M_t)dt,$$
    where $M_t$ is the density of $M$ at the point $t$, and similarly for $N$. Here $\log$ refers to the matrix logarithm. Specializing to the case when $N_t=tI$, corresponding to the quadratic variation of multidimensional Brownian motion $B$, one can postulate $$
    \mathfrak{h}( \mathbb Q \| \mathbb W):=  \frac{1}{2}\mathbb E_{\mathbb Q}[S( \langle X\rangle\,||\, \langle B\rangle)]=\frac{1}{2}\mathbb E_{\mathbb Q}\left[\int_0^1 [tr(\Sigma_t\log(\Sigma_t))+d-tr(\Sigma_t)] \, dt \right]. $$
Subsequently, one can represent a $d$-dimensional win-martingale as a martingale on the sub-probability simplex that terminates in its vertices. Numerical evidence suggests that the multidimensional neutral Wright-Fisher diffusion is not the optimal solution to the associated win-martingale optimization problem. We thank Yuxing Huang for his help on this matter. It begs to ask whether a different candidate for the multidimensional reciprocal specific relative entropy would have the desirable property of having the   multidimensional neutral Wright-Fisher diffusion as the optimal win-martingale.
\end{remark}

\bibliographystyle{siam}
\bibliography{ref}

\end{document}